\newtheorem{proposition}{Proposition}[section]
\newtheorem{theorem}[proposition]{Theorem}
\newtheorem{lemma}[proposition]{Lemma}
\newtheorem{definition}[proposition]{{Definition}}
\newtheorem{remark}[proposition]{{Remark}}
\newcommand{\cA}{{\mathcal A}}
\newcommand{\cB}{{\mathcal B}}
\newcommand{\cM}{{\mathcal M}}
\newcommand{\cC}{{\mathcal C}}
\newcommand{\cS}{{\mathcal S}}
\renewcommand{\aa}
\newcommand{\Hom}{\operatorname{Hom}\nolimits}
\newcommand{\bc}{Brauer configuration}
\newcommand{\bca}{\bc\ algebra}
\definecolor{candyapplered}{rgb}{1.0, 0.03, 0.0}
\def\thm@space@setup{%
  \thm@preskip=0.7cm \thm@postskip=0.3cm
}
\begin{document}

\date{today}
\title[Quotients of symmetric special multiserial algebras]
{Special multserial algebras are quotients of
symmetric special multiserial algebras}

\author[Green]{Edward L.\ Green}
\address{Edward L.\ Green, Department of
Mathematics\\ Virginia Tech\\ Blacksburg, VA 24061\\
USA}
\email{green@math.vt.edu}
\author[Schroll]{Sibylle Schroll}
\address{Sibylle Schroll\\
Department of Mathematics \\
University of Leicester \\
University Road \\
Leicester LE1 7RH \\
United Kingdom}
\email{schroll@le.ac.uk }

\subjclass[2010]{16G20, 
}
\keywords{special multiserial algebra, special biserial algebra,  symmetric special multiserial algebra, Brauer configuration algebra}
\thanks{This work was supported through the Engineering and Physical Sciences Research Council, grant
number EP/K026364/1, UK and by the University of Leicester in form of a study leave for the second author.}

\begin{abstract} In this paper we give a new definition of symmetric special multiserial algebras in terms of \emph{defining cycles}. As a consequence, we show 
that every special multiserial algebra is a quotient of a symmetric special multiserial algebra. 
\end{abstract}
\date{\today}
\maketitle


\section{Introduction}

 A major breakthrough in representation theory of finite dimensional algebras is the classification of algebras in terms of their representation type. This is either finite, tame or wild \cite{D}. Algebras of finite representation type have only finitely many isomorphism classes of indecomposable modules, the infinitely many indecomposable modules of a tame algebra can be parametrized by one-parameter families whereas the representation theory of a wild algebra contains that of the free algebra in two generators and so in some sense contains that of any finite dimensional algebra. Thus no hope of a parametrization of the isomorphism classes of the indecomposable modules can exist. 

For this reason, algebras of finite and tame representation type have been the focus of much of the representation theory of finite dimensional algebras. An important family of tame algebras are special biserial algebras defined in \cite{SW}. This class contains many of the tame group algebras of finite groups and tame subalgebras of group algebras of finite groups \cite{E}, gentle algebras, string algebras and symmetric special biserial algebras \cite{WW}, also known as Brauer graph algebras \cite{R, S}, algebras of quasi-quaternion type \cite{La} and the intensely studied Jacobian algebras of surface triangulations with marked points in the boundary arising in cluster theory \cite{ABCP}.  

The strength of the well-studied representation theory of special biserial algebras,  derives from the underlying string combinatorics. Namely, by \cite{GP, Ri, WW} every indecomposable non-projective module over a special biserial algebra is a string or band module. Not only does this give rise to a formidable tool for calculations and proofs but it also shows that special biserial algebras are of tame representation type. 

Special multiserial algebras, defined in \cite{VHW}, are in general of wild representation type and as a consequence their indecomposable modules cannot be classified in a similar way. 
It is therefore remarkable that many of the results that are known to hold for special biserial algebras still hold for special multiserial algebras. For example, a very surprising fact about the indecomposable modules of these wild algebras was shown in \cite{GS2}. Namely, the indecomposable modules over 
a special multiserial algebra are multiserial, that is, their radical is either 0 or a sum of uniserial submodules. Thus generalizing the analogous result for special biserial algebras \cite{SW}. However, given the absence of string
combinatorics in the multiserial case, the proof is built on an entirely different strategy. The same holds true for the result and proofs in this paper. 

We start by giving the definition of an algebra \emph{defined by cycles}. This definition is built on the notion of a \emph{defining pair}. We show that such an algebra is symmetric special multiserial and that conversely, every symmetric special multiserial algebra is an algebra defined by cycles.  Note that in this context a symmetric algebra is an algebra over a field  endowed with a symmetric linear form with no non-zero left ideal in its kernel.  Symmetric algebras play an important role in representation theory and many examples of well-known algebras are symmetric such as group algebras of finite groups or Hecke algebras. 

Given the new definitions of defining pairs and  algebras defined by cycles we show that we can construct a defining pair for every special multiserial algebra $A$ and that $A$ is a quotient of the corresponding algebra defined by cycles. Thus we prove that every special multiserial algebra is a quotient of a symmetric special multiserial algebra. This result is an analogue of the corresponding result for special biserial algebras \cite{WW}.  Moreover, the 
special biserial case follows from our  result omitting thus the need for the string combinatorics on which the proof in \cite{WW} is based.

\section{Preliminaries}

We let  $K$ denote a field and $Q$ a quiver.  An ideal $I$ in  the path algebra $KQ$ is \emph{admissible}
if $J^N\subseteq I\subseteq J^2$ for some $N\ge 2$, where $J$ is the ideal
in $KQ$ generated by the arrows of $Q$.
We begin by recalling the definition of a special multiserial algebra. 
 A $K$-algebra is a 
\emph{special multiserial algebra} if it is Morita equivalent to a quotient of a  path algebra,
$KQ/I$, by an admissible ideal $ I$  which satisfies the following condition:
\[\quad (M) \quad\text{ For every arrow }a\in Q\text{ there is at most one arrow }b\in Q\text{ such that }
ab\notin I\text{ and}\]
\[\quad\quad\text{ there is at most one arrow }c\in Q\text{ such that }ca\notin I\]

Throughout this paper we assume that all algebras are indecomposable. First we treat the radical square zero case.  We note that if $A=KQ/I$ with $I$
admissible is such that the
Jacobson radical of $A$  squares to zero, then $I=J^2$ 
and $A$ is a special multiserial algebra since all paths of length 2 are in $I$.   Thus, if $A$ is a radical square zero 
algebra, then we wish to show
that $A$ is the quotient of a symmetric special multiserial
algebra.  For this we recall that an algebra
$\Lambda =KQ/I$, with $I$ generated by a set of paths
of length $2$ is called \emph{gentle} if
\begin{enumerate}
\item[(G1)] $\Lambda$ is a special multiserial algebra, and,
\item[(G2)] at each vertex $v$  there are at most 2 arrows ending
at $v$ and at most 2 arrows starting at $v$.
\end{enumerate}
Recall from \cite{GS3} that an algebra $KQ/I$ is almost gentle if it is special multiserial and if $I$ can be generated by paths of length 2.  

\begin{proposition}\label{prop-r2-0} Assume that $A=KQ/J^2$ where 
$J$ is the ideal in $KQ$ generated by the arrows of $Q$.
Then $A$ is the quotient of a symmetric special
multiserial algebra.  In particular, the trivial extension
$A \rtimes D(A)$  is a symmetric special multiserial
algebra whose radical cube is zero, where $D(-)$ denotes
the duality $\Hom_K(-, K)$.
\end{proposition}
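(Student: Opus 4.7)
The plan is to show that the trivial extension $T(A) := A \ltimes D(A)$ itself does the job, and $A$ appears as the quotient $T(A)/D(A)$ (since $D(A)$ is a nilpotent ideal). Symmetry of $T(A)$ is classical and holds for any finite dimensional $A$, via the canonical symmetric bilinear form $(a+f, b+g) \mapsto f(b)+g(a)$. So the real content is to verify that $T(A)$ is special multiserial and that its radical cubed is zero, both of which will follow from a direct analysis of the quiver and multiplication table of $T(A)$.

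\textbf{Quiver and multiplication of $T(A)$.} I would describe $T(A)$ as a bound path algebra $KQ'/I'$ as follows. For each vertex $v$ of $Q$, write $e_v^* \in D(A)$ for the dual to $e_v$, and for each arrow $a:i\to j$ of $Q$, write $a^* \in D(A)$ for the dual to $a$. Using the bimodule structure $(xfy)(z)=f(yzx)$, a short computation gives $a^* = e_i\,a^*\,e_j$, so $a^*$ is naturally an arrow $j \to i$ in $T(A)$, and $e_v^* = e_v\,e_v^*\,e_v$. Direct calculation (repeatedly using $\rad(A)^2=0$) yields the multiplication rules
\[
 a \cdot b^* \;=\; \delta_{a,b}\, e_{t(a)}^*,\qquad
 b^* \cdot a \;=\; \delta_{a,b}\, e_{s(a)}^*,\qquad
 a\cdot b \;=\;0,\qquad a^*\cdot b^* \;=\;0
\]
for arrows $a,b \in Q_1$. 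In particular, whenever $v$ has an incident arrow $a$ one has $e_v^* = a a^*$ or $e_v^* = a^* a$, so $e_v^*$ lies in $\rad(T(A))^2$ and is not a new arrow; the arrows of $Q'$ are exactly $Q_1 \cup \{a^* : a \in Q_1\}$.

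\textbf{Condition (M) and $\rad^3 = 0$.} With the table above, checking (M) is mechanical: for an arrow $\alpha \in Q$, the only $\delta$ composable on the right with $\alpha \delta \notin I'$ is $\delta = \alpha^*$ (since $\alpha\beta=0$ and $\alpha\beta^* = 0$ whenever $\beta\ne\alpha$), and similarly the only left continuation is $\alpha^*$. The symmetric statement holds for $\alpha^*$: its unique right continuation is $\alpha$ and its unique left continuation is $\alpha$, because $\alpha^*\beta^* = 0$ and $\alpha^*\beta = 0$ for $\beta \ne \alpha$. Thus $T(A)$ satisfies (M), so it is special multiserial. For $\rad^3$, a further computation shows $\gamma \cdot e_v^* = 0 = e_v^* \cdot \gamma$ for every arrow $\gamma \in Q'_1$ (using $\rad(A)^2 = 0$ for $\gamma \in Q_1$ and $D(A)^2 = 0$ for $\gamma \in Q^*$), so $\rad(T(A))^3 = \rad(T(A))\cdot \rad(T(A))^2 = 0$.

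\textbf{Wrap-up and main obstacle.} Combining the three points, $T(A)$ is a symmetric special multiserial algebra with $\rad^3 = 0$, and $A = T(A)/D(A)$, proving the proposition. The only delicate step is the bookkeeping for the bimodule structure on $D(A)$, in particular getting the orientation of the dual arrows and keeping track of which products of duals collapse to the elements $e_v^*$; once this is pinned down, conditions (M) and the nilpotence of $\rad(T(A))$ to order three are immediate from the multiplication table and the hypothesis $\rad(A)^2 = 0$.
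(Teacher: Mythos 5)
Your proof is correct and uses the same key object as the paper: the trivial extension $A\rtimes D(A)$. The only difference is that the paper observes $A$ is almost gentle and cites \cite{GS3} (Theorem 4.3) for the fact that trivial extensions of almost gentle algebras are symmetric special multiserial, whereas you verify the quiver, condition (M), symmetry, and $\rad^3=0$ by direct computation in the radical-square-zero case --- a self-contained and perfectly valid substitute. Two cosmetic caveats: with the paper's left-to-right composition of paths and your stated action $(xfy)(z)=f(yzx)$, the labels in your multiplication table come out as $aa^*=e^*_{\mathfrak s(a)}$ and $a^*a=e^*_{\mathfrak t(a)}$ (the opposite of what you wrote), though nothing downstream depends on which idempotent dual appears; and your claim that no $e_v^*$ is a new arrow needs $v$ to have an incident arrow, which holds for every connected $Q$ except the one-vertex, no-arrow case $A=K$, where $A\rtimes D(A)\cong K[x]/(x^2)$ still satisfies the conclusion.
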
 

\begin{proof}
That $A$ is an almost gentle algebra is clear.
By \cite{GS3} Theorem 4.3, we have that $\Lambda=A\rtimes D(A)$ is a symmetric
special multiserial algebra.   
The algebra $A$ is clearly a quotient of $\Lambda$
and, since
$I=J^2$, the radical cube of $\Lambda$ is zero .
\end{proof}

We introduce notation and definitions needed for what follows.
We say that a nonzero element $x\in KQ$ is \emph{uniform}
if there exists vertices $v$ and $w$ in $Q$, corresponding to idempotents $e_v$ and $e_w$ in $KQ$ such that
$x= e_vxe_w$.  If $a$ and $b$ are arrows in $Q$, we let $ab$ denote
the path consisting of the arrow $a$ followed by the arrow $b$. If $p$ is a path in $Q$, then the start vertex of $p$ is denoted
$\mathfrak s(p)$ and the end vertex of $p$ is denoted $\mathfrak t(p)$.
 A cycle is a path $p$ such that $\mathfrak s(p)= \mathfrak t(p)$. If $C = a_1 ... a_n$ is a cycle in $Q$, then a cyclic permutation of $C$ is any cycle of the form $a_i a_{i+1} \ldots a_n a_1 \ldots a_{i-1}$ for all $ 1 \leq i \leq n$.   We say that a cycle in $Q$  is \emph{simple} if it has no repeated
arrows.  Note that the number of simple cycles is always finite.   If $C$ is a cycle in $Q$ and $p$ is a  path, we say
\emph{$p$ lies in  $C$} if $p$ is a subpath of $C^s$, for
some $s\ge 1$.  If $p$ is a path in $Q$, the \emph{length of
$p$}, denoted $\ell(p)$, is the number of arrows in $p$.

\section{Defining pairs}

In this section we give a method for constructing symmetric
special multiserial algebras. 
Suppose that $Q$ is a quiver.   We say the pair $(\cS,\mu)$
is a \emph{defining  pair in $Q$} if  $\cS$ is a set of simple cycles in $Q$
and $\mu\colon\cS\to \mathbb Z_{>0}$ which satisfy the following conditions:
\begin{enumerate}
\item[(D0)] If $C$ is a loop at a vertex $v$ and $C\in\cS$, then $\mu(C)> 1$.
\item[(D1)] If a simple cycle is in $\cS$, every cyclic permutation of the cycle is in $\cS$.
\item[(D2)] If $C\in\cS$ and $C'$ is a cyclic  permutation of $C$ then $\mu(C)=\mu(C')$.
\item[(D3)] Every arrow occurs in some simple cycle in $\cS$.
\item[(D4)]  If an arrow occurs in two cycles in  $ \cS$, the cycles are cyclic permutations
of each other.
\end{enumerate}

If $(\cS,\mu)$ is a defining pair in $Q$ then the $K$-algebra they define has
quiver $Q$ and ideal of relations generated by all relations of the following three
types:
\begin{enumerate}
\item[Type 1] $C^{\mu(C)}-{C'}^{\mu(C')}$,  if $C$ and $C'$ are cycles in $\cS$ at some 
vertex $v\in Q_0$.
\item[Type 2]  $ C^{\mu(C)}a$, if $C\in\cS$ and $a$ is the first arrow in $C$.
\item[Type 3]  $ab$, if $a,b\in Q_1$ and $ab$ 
is not a subpath of 
any $C\in\cS$.
\end{enumerate} 

The algebra $A=KQ/I$, where $I$ is generated by all relations of Types 1, 2, and 3, is 
called \emph{the algebra defined by $(\cS,\mu)$} and we call $A$ an \emph{algebra defined by cycles}.  We note that  some of the generators  of Types 1,2, and 3
are in general redundant. 

\begin{theorem}\label{defining  pair}Let $Q$ be a quiver, $K$ a field, and $(\cS,\mu)$ a
defining pair for $Q$.  Let $A=KQ/I$ be the algebra defined by $(\cS,\mu)$.   Then $A$
is a symmetric special multiserial algebra.

\end{theorem}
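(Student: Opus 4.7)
The plan is to establish the two parts of the conclusion separately: first that $A$ satisfies condition (M) together with admissibility of $I$, hence is special multiserial, and second that $A$ admits a non-degenerate symmetric linear form.

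For condition (M), fix an arrow $a\in Q$. By (D3), $a$ lies in some $C\in\cS$, and within $C$ the arrow $a$ is followed by a uniquely determined arrow $b$ (with $b=a$ in the loop case, where (D0) forces $\mu(C)\geq 2$). If $b'$ is any arrow with $ab'\notin I$, then the Type~3 generators require $ab'$ to be a subpath of some $C'\in\cS$; but then $a$ lies in both $C$ and $C'$, so (D4) forces $C'$ to be a cyclic permutation of $C$, which forces $b'=b$. The dual argument gives uniqueness of a predecessor. All generators of $I$ lie in $J^2$, and Type~2 together with (M) forces every path of length larger than $\max_{C\in\cS}\mu(C)\ell(C)$ to vanish in $A$, so $I$ is admissible and $A$ is finite-dimensional.

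For symmetry, I would first identify a spanning set. Every nonzero path in $A$ is a subpath of some $C^{\mu(C)}$ with $C\in\cS$ (by Types~2 and 3). For each vertex $v$ lying on some cycle in $\cS$, Type~1 relations say that all powers $C^{\mu(C)}$ with $C\in\cS$ starting at $v$ represent the same element, which I denote $z_v\in e_vAe_v$; these are the classes of maximal-length permitted paths, and they span the socle of $A$. Define a $K$-linear map $\lambda\colon A\to K$ by $\lambda(z_v)=1$ for every such $v$ and $\lambda=0$ on the remaining basis vectors. To verify $\lambda(xy)=\lambda(yx)$ it is enough to take $x=p$, $y=q$ paths; the only nontrivial case is $pq\equiv z_v$, meaning $pq$ equals $C^{\mu(C)}$ (up to Type~1) for some $C\in\cS$ starting at $v$. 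Then $qp$ is, cyclically, the same full power rooted at $\mathfrak s(q)$, so $qp\equiv z_{\mathfrak s(q)}$ and $\lambda(qp)=1$. To check that $\ker\lambda$ contains no nonzero left ideal, take any nonzero $x\in e_wA$ and, using the multiserial structure together with (D3), find a path $q$ for which $xq$ is a nonzero scalar multiple of $z_w$.

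The main obstacle is the careful bookkeeping behind the symmetry and non-degeneracy of $\lambda$: one must keep track of how each nonzero path in $A$ corresponds to a unique cyclic rotation of a unique cycle class in $\cS$, and verify that the Type~1 identifications are compatible on both sides of $\lambda(pq)=\lambda(qp)$. Conceptually, a defining pair is essentially the combinatorial data of a Brauer configuration---with $\cS$ encoding the cyclic orderings of arrows around vertices of the configuration and $\mu$ giving vertex multiplicities---so that $A$ should coincide with the corresponding Brauer configuration algebra; the theorem is then a reformulation of the known fact that such algebras are symmetric special multiserial, and the direct argument above is essentially the content of that identification.
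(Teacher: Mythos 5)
Your proposal is correct and follows essentially the same route as the paper: the same linear form (value $1$ on the full powers $C^{\mu(C)}$ for $C\in\mathcal{S}$, $0$ elsewhere), the same cyclic-permutation argument for $\lambda(pq)=\lambda(qp)$, the same non-degeneracy argument splitting into the ``complete a non-maximal path'' case and the ``socle elements identified by Type 1'' case, and the same use of (D3), (D4) and the Type 2/3 relations for condition (M) and admissibility. The only cosmetic differences are that you define the form on a basis of $A$ rather than on $KQ$ and then descending, and that your closing remark identifying defining pairs with Brauer configurations is recorded in the paper as a separate converse statement rather than used in this proof.
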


{\it Proof.}
We begin by showing that $I$ is an admissible  ideal.  Clearly, $I$ is contained
in the ideal generated by  paths of length 2.  Let $N=\max\{\mu(C)\ell(C)\mid 
{C\in\cS}\}$.
 We claim that all  paths of length $N+1$ are in $I$. 
 Let $p$ be such
a  path.  If there are arrows $a$ and $a'$ such that $aa'$ is a subpath of $p$ and $aa'$ is a
Type 3 relation, then $p\in I$.  Suppose that $p$ contains no Type 3 relations. Then (D3) and the definition of $N$ imply that there is a simple cycle $C\in\cS$ and an arrow $b$ so that either 
$bC ^{\mu(C)}$ or $C^{\mu(C)}b$ is a subpath of $p$.  
 We suppose first that $C^{\mu(C)}b$ is a subpath of $p$.
  Then $C^{\mu(C)}b$ either 
is a Type 2 relation or $Cb$ contains a Type 3 relation.   Finally, suppose that $bC^{\mu(C)}$ is a subpath
of $p$.  If $b$ is the last arrow in $C$, then $bC^{\mu(C)}$ is a Type 2 relation using (D1)  and (D2).  If
$b$ is not the last arrow in $C$, then $ba$ is a Type 3 relation where $a$ is the first arrow of $C$.

Next we show that $A$ is a special multiserial algebra.  Let $a$ be an arrow.  Suppose that
$ab$ is a path of length 2 in $Q$.   By (D3) and (D4), $a$ is in a cycle $C$ in $\cS$ that is unique up to 
cyclic permutation.  Either $b$ is the unique arrow such that $ab$ lies in $C$ or $ab$ is a Type  3 relation
and hence in $I$.  Thus, there is at  most one arrow $b$ such that $ab\notin I$.  Similarly, there
is at  most one arrow $c$ such that $ca\notin I$ and we see that $A $  is a special  multiserial
algebra.

Finally we show that $A$ is a symmetric algebra. 
We let  $\pi\colon KQ\to A$ denote the canonical surjection.
  Define $f\colon KQ\to K$ as follows.
If $p$ is a path in $Q$, define  
$$f(p)=\begin{cases} 1, &\text{if }p=C^{\mu(C)}\text{ for some } 
C\in \cS\\ 0, &\text{otherwise. }
\end{cases}$$  Linearly extend $f$ to $KQ$.   The reader may check that
$f$ induces a linear map $\bar f\colon A\to K$.  Let $\cB$ be the set of  paths
in $Q$.  All sums will have only a finite number of nonzero terms.
Note that if $x=\sum_{p\in\cB} \alpha_pp\in KQ$ with $\alpha_p\in K$, then $f(x)=\sum_{C\in\cS}\alpha_{C^{\mu(C)}}$.

First we show that 
if $\lambda,\lambda'\in A$, then
$\bar f(\lambda\lambda')=\bar f(\lambda'\lambda)$.   
Let $x=\sum_{p\in\cB}\alpha_pp\in KQ$ and $y=\sum_{q\in\cB}\beta_qq\in KQ$
such that $\pi(x)=\lambda$ and $\pi(y)=\lambda'$.
Then
\[
f(xy)=f((\sum_{p\in\cB}\alpha_pp)(\sum_{q\in\cB}\beta_qq))=
\sum_{p\in\cB}\sum_{q\in\cB}\alpha_p\beta_qf(pq)=\sum_{pq=C^{\mu(C)}\text{ for }C\in\cS}\alpha_p\beta_q
\]
and
\[
f(yx)=f((\sum_{q\in\cB}\beta_qq)(\sum_{p\in\cB}\alpha_pp))=
\sum_{q\in\cB}\sum_{p\in\cB}\beta_q\alpha_pf(qp)=\sum_{qp=C^{\mu(C)}\text{ for }C\in\cS}\beta_q\alpha_p
\]

 In the above equations for all $p, q \in \cB$ for which $pq \neq 0$, we have that $pq$ is a cyclic permutation of $qp$. It then follows that   $f(xy)=f(yx)$ and hence $\bar f(\lambda\lambda')=\bar f(\lambda'\lambda)$.

Finally
we claim that $\ker(\bar f)$ contains 
no nonzero left or right ideals.   We start by proving that there are no right ideals in $\ker \bar f$.  
Suppose that $\mathfrak I$ is a right  ideal of $A$ contained
in the kernel of $\bar f$.  Assume $\mathfrak I\ne (0)$ and let $\lambda\in\mathfrak I$ with
$\lambda\ne 0$.  Then $\lambda=\pi(\sum_{p\in\cB}\alpha_pp)$ where $\alpha_p\in K$ and
all but a finite number of $\alpha_p\ne 0$.  Without loss of generality, we  may assume that
if $\alpha_p\ne 0$, then $p\notin I$.  First suppose that there is a path $p^*\notin I$ such that
$\alpha_p^*\ne 0$ and $p^*$ is not of the form $C^{\mu(C)}$ for any $C\in\cS$.   Then there is
a unique $C\in\cS$ and path $q$ such  that $p^*q=C^{\mu(C)}$.   By (D1)-(D4), if $p'\ne p^*$, 
then $p'q$ is not of the form $C'^{\mu(C')}$ for any $C'\in\cS$.
Hence \[\bar f(\lambda\pi(q))=\bar f(\pi((\sum_{p\in\cB}\alpha_pp)q))=f(\sum_{p\in\cB}\alpha_ppq)
=\alpha_{p^*}\ne 0.\]
But this contradicts $\mathfrak I\subseteq \ker(\bar f)$ since $\lambda \pi(q)\in\mathfrak I$.
The proof that there a no left ideals in $\ker(\bar f)$ is similar to the proof for right ideals.

Thus we may assume that if $\alpha_p\ne 0$ then $p=C^{\mu(C)}$ for some $C\in\cS$. So we have $\lambda = \pi (\sum_{v \in Q_0} \sum_{C \in \cS_v} \alpha_{C^{\mu(C)}}C^{\mu(C)})$, where $\cS_v =\{C\in\cS\mid C \text{ is a cycle at } v\}$ for any $v \in Q_0$.
Since $\lambda\ne 0$, we see there is some vertex $v$ such that ${ e_v}\lambda e_v\ne 0$. Then
\[0=\bar f({e_v}\lambda { e_v})=\bar f(\pi(\sum_{C\in\cS_v}\alpha_{C^{\mu(C)}}C^{\mu(C)}))=
f(\sum_{C\in\cS_v}\alpha_{C^{\mu(C)}}C^{\mu(C)})=\sum_{C\in\cS_v}\alpha_{C^{\mu(C)}}.\]
Choose some $C^*\in\cS_v$.   Then, using that, for $C\in\cS_v$, $(C^{\mu(C)}-{C^*}^{\mu(C^*)})$ is a Type 1
relation and $\sum_{C\in\cS_v}\alpha_{C^{\mu(C)}}=0$ , we see that
\[  e_v\lambda  e_v=\pi(\sum_{C\in\cS_v}\alpha_{C^{\mu(C)}}C^{\mu(C)})-
(\sum_{C\in\cS_v}\alpha_{C^{\mu(C)}})\pi({C^*}^{\mu(C^*)})=\]\[
\sum_{C\in\cS_v}\alpha_{C^{\mu(C)}}\pi(C^{\mu(C)}-{C^*}^{\mu(C^*)})=0,
\]
contradicting $e_v\lambda e_v\ne 0$.  This completes the proof.

The converse of the above Theorem is also true.  The Theorem below is not
used in the remainder of the paper and we only sketch the
proof.  The sketch below assumes
knowledge of Brauer configuration algebras found in \cite{GS1}.

\begin{theorem}\label{symm spec multiserial has defining pair} Let $A=KQ/I$ be an indecomposable symmetric
special multiserial algebra with Jacobson radical squared nonzero, where $I$ is an admissible ideal in $KQ$.  Then there is a defining
pair $(\cS,\mu)$ in $Q$ such that the  algebra defined by $(\cS,\mu)$ is isomorphic to $A$.
\end{theorem}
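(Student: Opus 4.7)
The plan is to invoke the identification, established in \cite{GS1}, between indecomposable symmetric special multiserial algebras with radical square nonzero and \bca s. Under this identification one may write $A\cong K\Qg/\Ig$ for some \bc $\Gamma$, so the problem reduces to constructing a defining pair directly from $\Gamma$.

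From $\Gamma$ I read off the defining data as follows. For each non-truncated vertex $v$ of $\Gamma$, the cyclic ordering of polygons incident to $v$ determines a \emph{special cycle} $C_v$ in $\Qg$, obtained by concatenating the arrows that link consecutive polygons in this ordering. I let $\cS$ be the collection of all cyclic permutations of the $C_v$ as $v$ ranges over the non-truncated vertices of $\Gamma$, and define $\mu\colon\cS\to\mathbb Z_{>0}$ by taking $\mu(C)$ to be the \bc multiplicity at the vertex $v$ from which $C$ arises. Conditions (D1) and (D2) are then immediate from this construction.

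I next verify the remaining axioms. Condition (D3) holds because each arrow of $\Qg$ comes from a consecutive pair of polygons in the cyclic ordering at some non-truncated vertex of $\Gamma$, and (D4) holds because each arrow is produced by a unique such vertex, so it lies in a unique cycle up to cyclic permutation. Condition (D0), namely that a loop in $\cS$ must have multiplicity strictly greater than $1$, follows from the admissibility of $\Ig$: a loop $C$ corresponds to a non-truncated vertex of $\Gamma$ with a single polygon in its cyclic ordering, and admissibility forces the associated multiplicity to be at least $2$.

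Finally I compare ideals. By \cite{GS1}, the ideal $\Ig$ is generated by three classes of relations which match Types 1, 2, and 3 of a defining pair: the relations $C_v^{\mu(C_v)}-C_w^{\mu(C_w)}$ at a common polygon (Type 1), the relations $C^{\mu(C)}a$ where $a$ is the initial arrow of $C$ (Type 2), and the monomial relations $ab$ for paths of length two not occurring along any special cycle (Type 3). Hence the algebra defined by $(\cS,\mu)$ is isomorphic to $A$. The main obstacle is the careful handling of edge cases in the \bc --- truncated vertices, loops, and polygons visiting a vertex with multiplicity greater than one --- together with verifying that the hypothesis $\rad^2(A)\ne 0$ ensures that the \bc structure extracted from $A$ is non-degenerate enough for the three families above to indeed generate $\Ig$ without surplus.
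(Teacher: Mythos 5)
Your proposal is correct and follows essentially the same route as the paper's (sketched) proof: identify $A$ with the Brauer configuration algebra of some $\Gamma$, take $\cS$ to be the set of special cycles with $\mu$ given by the configuration's multiplicity function, check (D0)--(D4), and match the three types of relations. You simply fill in more of the verification that the paper leaves to the reader (the paper cites \cite{GS2} rather than \cite{GS1} for the identification, but that is a bibliographic detail).
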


{\it Proof.}
By \cite{GS2}, we may assume that $A$ is the \bca\ associated to a \bc\ $\Gamma=
(\Gamma_0,\Gamma_1,\mu, \mathfrak o)$.   Let $\cS$ be the set of special
cycles in the quiver of $A$.   If $C$ is a special $\alpha$-cycle for some $\alpha\in\Gamma_0$,
define $\mu^*(C)=\mu(\alpha)$.  Using the properties of special cycles of a \bca,  the
reader may check that $(\cS,\mu^*)$ is a defining pair in $Q$.  It is straightforward 
to see that the \bca\ $A$ is isomorphic to the algebra defined by $(\cS,\mu^*)$.

\section{quotients}

For the remainder of this section, we let $A=KQ/I$ be a special multiserial algebra, that is $I$ satisfies
condition (M).  
We introduce two functions associated to $A$ which play a central role.
Let $\diamond$ be some element not in $Q_1$ and 
set $\cA=Q_1 \cup \{\diamond\}$.  Define
$\sigma\colon Q_1\to \cA$ and $\tau\colon Q_1\to \cA$ by
\[\sigma(a)=\begin{cases} b& \text{ if } ab\notin I\\
                \diamond &\text{ if }ab\in I \text{ for all }b\in Q_1\end{cases}
\]
 and
\[\tau(a)=\begin{cases} c& \text{ if } ca\notin I\\
                \diamond &\text{ if }ca\notin I \text{ for all }c\in Q_1\end{cases}.\]
where $a,b,c\in Q_1$. From the definition of a special multiserial algebra, we see that
these functions are well-defined. Since $A$ is finite dimensional,  one of two things
occur for $\sigma$ when repeatedly applied to an arrow $a\in Q_1$.  
Either there is a smallest  positive integer $m_a$ such that 
$\sigma^{m_a}(a)=\diamond$ or there is a smallest  positive
integer $\hat m_a$ such that $\sigma^{\hat m_a}(a)=a$.  Similarly, either there is a
smallest positive integer $n_a$ such that $\tau^{n_a}(a)=\diamond$ or there
is a positive integer $\hat n_a$ such that $\tau^{\hat n_a}(a)=a$.  

We list some basic properties of $\sigma$ and $\tau$.
\begin{enumerate}
\item[B1] If $\sigma(a)\in Q_1$, then $\tau\sigma(a)=a$.
\item[B2] If $\tau(a)\in Q_1$, then $\sigma\tau(a)=a$.
\item[B3]For $a\in Q_1$,  $m_a$ exists if and only if $n_a$ exists.
\item[B4] For $a\in Q_1$,  $\hat m_a$ exists if and only if $\hat n_a$ exists.
\end{enumerate}

Along with $\sigma$ and $\tau$, we need one more concept.  We say a
path $M=a_1a_2\cdots a_r$, with $a_i\in Q_1$ is \emph{$(\sigma,\tau)$-maximal} if
$\sigma(a_r)=\diamond=\tau(a_1)$.   Let $\cM$ denote the set of
$(\sigma,\tau)$-maximal paths.

Suppose $a\in Q_1$ is such that $\hat m_a$ exists.  Then we have a simple oriented
cycle in $Q$, denoted $C_a$ such that $C_a =a \sigma(a) \sigma^2(a)\cdots
 \sigma^{\hat m_a-1}(a)$
since $\sigma^{\hat m_a}(a)=a$ implies that $\mathfrak s(C_a)$ = $\mathfrak t(C_a)$.  It is easy to check that $C_a$ is a simple cycle.
We let the set of such simple cycles be denoted by  $\cC$.
Note that if $C\in \cC$, then every cyclic permutation of $C$ is in $\cC$.

Now suppose that $a\in Q_1$ is such that $m_a$ exists.   Then 
\[M_a=\tau^{n_a-1}(a)\tau^{n_a-2}(a)\cdots\tau(a)a\sigma(a)\cdots\sigma^{m_a-1}(a)\]
is a $(\sigma,\tau)$-maximal path in which $a$  occurs.

The next lemma lists some basic properties of the above constructions.
The proof is straightforward and left to the reader.

\begin{lemma}\label{max paths and cycles}Let $A=KQ/I$ be a special multiserial algebra
and let $a\in Q_1$.  Then
\begin{enumerate}
\item Either $a$ occurs in the $(\sigma,\tau)$-maximal path $M_a$ or the simple cycle $C_a$ but not both.
\item The arrow $a$ occurs in at most one $(\sigma,\tau)$-maximal path.
\item  If $a$ is an arrow in a simple cycle $C_b\in\cC$, for some arrow $b$,
 then $C_b$ is a cyclic permutation
of $C_a$.
\item The length of $C_a$, if it exists, is $\hat m_a =\hat n_a$.
\item  If $M\in\cM$ is a maximal  path, then $M$ has no repeated arrows.
\item If $M\in\cM$ and $a$ is an arrow in  $M$, then $M=M_a$.
\item  If $a$ occurs in the $(\sigma,\tau)$-maximal path $M_a$ then the length of $M_a$ is
$m_a+n_a -1$.

\end{enumerate}
\end{lemma}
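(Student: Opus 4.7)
The plan is to verify the seven items by unwinding the definitions of $\sigma$, $\tau$, $M_a$ and $C_a$, using B1--B4 as the only real input. The central observation is that B1 and B2 make $\sigma$ a bijection between $\{a\in Q_1:\sigma(a)\ne\diamond\}$ and $\{a\in Q_1:\tau(a)\ne\diamond\}$, with two-sided inverse $\tau$ where both are arrow-valued. Since $Q_1$ is finite, the forward $\sigma$-orbit of any arrow $a$ must either terminate at $\diamond$ or eventually re-enter itself; injectivity forces the re-entry to happen at $a$ itself. Combined with the symmetric statement for $\tau$ and B3--B4, this gives (1): either both $m_a$ and $n_a$ exist, in which case $a$ occurs in $M_a$ and $C_a$ is undefined, or both $\hat m_a$ and $\hat n_a$ exist, in which case $a$ occurs in $C_a$ and $M_a$ is undefined. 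The two options are exclusive because $\diamond\notin Q_1$.

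For (4), iterating B1 gives $\tau^{k}(\sigma^{k}(a))=a$ for $0\le k\le\hat m_a$, hence $\tau^{\hat m_a}(a)=a$ and so $\hat n_a\le\hat m_a$; the reverse inequality is symmetric, using B2. Item (3) is then immediate: if $a$ lies in $C_b=b\sigma(b)\cdots\sigma^{\hat m_b-1}(b)$, write $a=\sigma^k(b)$ for the unique $0\le k<\hat m_b$; the cyclic permutation of $C_b$ starting at $a$ is $a\sigma(a)\cdots\sigma^{\hat m_a-1}(a)=C_a$, where we use $\hat m_a=\hat m_b$ from (4).

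The remaining items concern maximal paths. For (5), a repetition $a_i=a_j$ with $i<j$ in $M=a_1\cdots a_r\in\cM$ would give $\sigma^{j-i}(a_i)=a_i$, so $\hat m_{a_i}$ exists; but maximality forces $\sigma^{r-i}(a_i)=\sigma(a_r)=\diamond$, so $m_{a_i}$ also exists, contradicting (1). For (6), if $a=a_i\in M\in\cM$, then applying $\sigma$ forward and $\tau$ backward from $a_i$, and invoking (5) to rule out premature collisions, reconstructs the two tails of $M$ uniquely as $a_{i+j}=\sigma^j(a)$ and $a_{i-j}=\tau^j(a)$; since $\sigma(a_r)=\diamond=\tau(a_1)$, this forces $r-i=m_a-1$ and $i-1=n_a-1$, so $M=M_a$. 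Item (2) is then an immediate corollary of (6), and (7) is the arrow count $(n_a-1)+1+(m_a-1)=m_a+n_a-1$ read off directly from the explicit definition of $M_a$.

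The only step with real content is (1), whose subtlety is the exclusivity of the two cases and the fact that one always holds; the rest is bookkeeping powered by the bijectivity of $\sigma$ and $\tau$ on their natural domains, which is presumably why the authors were happy to leave the proof to the reader.
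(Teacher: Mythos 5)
The paper leaves this lemma to the reader, and your argument is the intended one: everything rests on the finiteness of $Q_1$ together with B1--B2, which make $\sigma$ and $\tau$ mutually inverse wherever they are arrow-valued, so that each $\sigma$-orbit either terminates at $\diamond$ or first returns to its starting arrow, giving the dichotomy in (1) from which the rest is bookkeeping. Two cosmetic slips, neither affecting correctness: in (5) the exponent should be $\sigma^{r-i+1}(a_i)=\sigma(a_r)=\diamond$, and in (3) the equality $\hat m_a=\hat m_b$ follows from $a$ and $b$ lying on the same periodic $\sigma$-orbit (so they have the same period), not from item (4).
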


We now construct a new quiver, $Q^*$, from $Q$. Set $Q^*_0=Q_0$.
For each $M\in \cM$,  let $a_M$ be an arrow from $\mathfrak t(M)$ to
$\mathfrak s(M)$.  Set $Q^*_1=Q_1\cup\{a_M\mid M\in \cM\}$.  
Note that $Ma_M$ is a simple cycle in $Q^*$ at $\mathfrak s(M)$.
Let $\cM^*$ denote the set of cycles in $Q^*$ consisting of all
cyclic permutations of the $Ma_M$, for $M\in\cM$.

Since $Q$ is a subquiver of $Q^*$, we will freely view  paths 
and cycles in $Q$ as
paths or cycles in $Q^*$.  Let \[\cS=\{C\in \cC\}\cup\cM^*,\] viewed
as a set of simple cycles in $Q^*$.
Next, since $I$ is admissble, there is a smallest positive integer $N$,
$N\ge 2$, such that
all paths of length $N$ or larger, are in $I$.  Define
$\mu\colon\cS\to \mathbb Z_{>0}$ by $\mu(C)=N$ for
all $C\in\cS$ 

\begin{proposition}\label{defining set} Keeping the above notation,
$(\cS,\mu)$ is a defining  pair in  $Q^*$.
\end{proposition}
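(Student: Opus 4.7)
The plan is to verify the five axioms (D0)--(D4) for $(\cS,\mu)$ in turn, using Lemma~\ref{max paths and cycles} together with the definitions of $\cC$, $\cM^*$, and $N$. The easy axioms are (D0)--(D2). Since $I$ is admissible we have $I\subseteq J^2$, so no arrow lies in $I$; hence the minimal $N$ with $J^N\subseteq I$ satisfies $N\ge 2$, and $\mu(C)=N>1$ for every $C\in\cS$, which gives (D0). For (D1), cycles in $\cC$ are closed under cyclic permutation by the observation made just before the proposition, while $\cM^*$ is defined as the set of all cyclic permutations of cycles $Ma_M$. Axiom (D2) is immediate because $\mu$ is constant on $\cS$.

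For (D3) I would split the arrows of $Q^*$ into old and new. A new arrow $a_M$ sits in the cycle $Ma_M\in\cM^*\subseteq\cS$ by construction. For $a\in Q_1$, Lemma~\ref{max paths and cycles}(1) says that $a$ belongs to exactly one of a simple cycle $C_a\in\cC$ or a $(\sigma,\tau)$-maximal path $M_a\in\cM$; in the first case $a$ lies in $C_a\in\cS$, and in the second it lies in the cycle $M_a a_{M_a}\in\cM^*\subseteq\cS$.

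The condition requiring the most care is (D4), and this is where I expect the main obstacle: one must carefully track through the dichotomy in Lemma~\ref{max paths and cycles}(1) and combine it with the uniqueness statements in parts (3) and (6). Suppose an arrow $b$ of $Q^*$ lies in two cycles $D_1,D_2\in\cS$. If $b=a_M$ is new, then $b\notin Q_1$, so $b$ cannot lie in any $C\in\cC$; hence $D_1,D_2\in\cM^*$. Since the added arrows $a_M$ are indexed by, and hence distinct for distinct, elements of $\cM$, the only cycle $M'a_{M'}$ containing $b$ is the one with $M'=M$, and so both $D_i$ are cyclic permutations of $Ma_M$. If instead $b\in Q_1$, Lemma~\ref{max paths and cycles}(1) says $b$ lies in exactly one of $C_b$ or $M_b$. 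If $b$ lies in $C_b$, then by Lemma~\ref{max paths and cycles}(3) every $C\in\cC$ containing $b$ is a cyclic permutation of $C_b$; moreover $b$ can lie in no cycle $M'a_{M'}\in\cM^*$, since that would force $b\in M'$ (as $a_{M'}\notin Q_1$) and hence $M'=M_b$ by Lemma~\ref{max paths and cycles}(6), contradicting the fact that $b$ is not in $M_b$. The remaining case $b\in M_b$ is symmetric, and completes the verification.
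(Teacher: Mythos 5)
Your proof is correct and follows essentially the same route as the paper: a direct verification of (D0)--(D4) using the construction of $\cC$, $\cM^*$, and $N$, with Lemma~\ref{max paths and cycles} doing the work for (D3) and (D4). The only difference is that you spell out the case analysis for (D4) (the dichotomy of Lemma~\ref{max paths and cycles}(1) combined with parts (3) and (6)), which the paper compresses into a single sentence.
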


{\it Proof.}
Since $N\ge 2$ and we see that (D0) holds.
Since $\mu$ is constant on $\cS$, (D2) holds.  It is immediate
that (D1) holds.  If $a$ is an arrow in $Q$, then $a$ occurs in
some $C\in\cC$ or $M\in \cM$.  It is easy to see that for all $M\in \cM$, the arrow $a_M$
occurs in some cycle in $\cM^*$.  Thus, every arrow in $ Q^*$ 
occurs in some cycle in $\cS$ and (D3) holds.  By Lemma \ref{max
paths and cycles}, and our construction, (D4) holds and the proof
is complete.

Let $A^*=KQ^*/I^*$ be the algebra defined by $(\cS,\mu)$.   By Theorem \ref{defining pair}, $A^*$ is a symmetric special multiserial algebra which we
call the \emph{symmetric special multiserial algebra associated to $A$}.

\begin{theorem} \label{quotient theorem}Let $A$  be a special multiserial algebra and $A^*$ be
the symmetric special multiserial algebra associated to $A$  defined by $(\cS,\mu)$.
Then $A$ is a quotient of $A^*$.

\end{theorem}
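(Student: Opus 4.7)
The plan is to construct a surjective $K$-algebra homomorphism $\bar\phi\colon A^*\twoheadrightarrow A$. Since $Q$ is a subquiver of $Q^*$, I would define the algebra homomorphism $\phi\colon KQ^*\to A$ by sending each idempotent $e_v$ to itself, each arrow $a\in Q_1$ to its class $a+I$ in $A$, and each new arrow $a_M$ (one for each $M\in\cM$) to $0$. It then suffices to verify that $\phi$ annihilates every generator of the ideal $I^*$ defining $A^*$, after which $\phi$ descends to $\bar\phi\colon A^*\to A$; surjectivity is automatic because the image contains all idempotents and all arrows of $Q$, which generate $A$ as a $K$-algebra.

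The relations of Type~1 and Type~2 are handled uniformly by the choice $\mu\equiv N$. For any $C\in\cS$ I claim $\phi(C^N)=0$ in $A$. If $C\in\cC$, then $C$ is a simple cycle in $Q$ of length at least $1$, so $C^N$ is a path of length at least $N$ in $Q$, and hence lies in $I$ by the choice of $N$. If $C\in\cM^*$, then $C$ is a cyclic permutation of some $Ma_M$ and therefore contains one of the new arrows $a_M$, which $\phi$ sends to $0$; hence $\phi(C)=0$. In both cases $\phi(C^{\mu(C)})=0$, so the Type~1 relations $C^{\mu(C)}-{C'}^{\mu(C')}$ and the Type~2 relations $C^{\mu(C)}a$ are killed by $\phi$.

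The Type~3 verification is the main point, as it is here that the combinatorics of $\sigma$, of the simple cycles $C_a\in\cC$, and of the completed cycles $Ma_M\in\cM^*$ must cooperate. I need to show that if $ab$ is a path of length two in $Q^*$ that is not a subpath of any $C\in\cS$, then $\phi(ab)=0$. If either $a$ or $b$ is one of the new arrows $a_M$, this is automatic. Otherwise $a,b\in Q_1$, and I would argue the contrapositive: supposing $ab\notin I$, I exhibit a cycle of $\cS$ containing $ab$. Since $ab\notin I$, the definition of $\sigma$ forces $\sigma(a)=b$, and by Lemma~\ref{max paths and cycles}(1) the arrow $a$ lies in exactly one of a simple cycle $C_a\in\cC$ or a $(\sigma,\tau)$-maximal path $M_a\in\cM$. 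If $a$ lies in $C_a=a\sigma(a)\cdots\sigma^{\hat m_a-1}(a)$, then the arrow immediately following $a$ (cyclically) is $\sigma(a)=b$, so $ab$ is a subpath of a cyclic permutation of $C_a$, which belongs to $\cS$ by (D1). If $a$ lies in $M_a$, then $\sigma(a)=b\ne\diamond$ says $a$ is not the last arrow of $M_a$, so $b$ follows $a$ in $M_a$; hence $ab$ is a subpath of $M_a a_{M_a}\in\cM^*\subseteq\cS$. Both conclusions contradict the assumption on $ab$, so $ab\in I$ and $\phi(ab)=0$. With all three relation types handled, $\bar\phi\colon A^*\to A$ is a well-defined surjective algebra homomorphism, realizing $A$ as a quotient of $A^*$.
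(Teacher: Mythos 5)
Your proposal is correct and follows essentially the same route as the paper: define the map on $KQ^*$ by fixing the old arrows, killing the new arrows $a_M$, and check the three relation types, with Type~1/2 dying either because the cycle contains some $a_M$ or because $C^N$ has length at least $N$ and hence lies in $I$, and Type~3 handled via the link between $\sigma(a)=b$ and membership of $ab$ in a cycle of $\cC$ or a maximal path of $\cM$. The only cosmetic differences are that you map directly into $A$ rather than factoring through $KQ$, and you phrase the Type~3 step contrapositively (in somewhat more detail than the paper).
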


{\it Proof.} To define $F\colon KQ^* \to KQ$ we use the universal mapping
property of path algebras.  
That is, let $F(e_v)$, for all $v\in Q^*_0$, be a full  set of orthogonal idempotents in $KQ$, $F(a_M)=0$, for all $M\in\cM$, and
$F (a)=a$ for all the remaining arrows { in $Q^*$}.  The $K$-algebra
homomorphism $F$ is clearly surjective.  The homomorphism $F$
will induce a surjection $\hat F\colon A^*\to A$ if
$F(I^*)\subseteq I$.

We prove $F(I^*)\subseteq I$ by showing 
that $F$ applied the generators of $I^*$ of
Types 1,2, and 3 are in $I$.  If  $C$ and $C'$ are in $\cS$ then consider
$C^{\mu(C)}-C'^{\mu(C')}$ .
If $C$ (or $C'$) contains an arrow of the form $a_M$, for some $M\in\cM$,
then 	$F$ sends $C^{\mu(C)}$ (or  $C'^{\mu(C')}$) to 0 which is in $I$.
If  $a_M$ does not occur in $C$ (or in  $C'$) then $C^{\mu(C)}$
(or $C'^{\mu(C')})$ has length greater than or equal to $N$ since $\mu$ has
constant value $N$.  Recalling that paths of length greater or equal to $N$
in $KQ$ are in $I$, we conclude that $F$ applied to a Type 1 relation
is in $I$.

A similar argument works for Type 2 relations.

Finally, suppose that $ab$ is a Type 3 relation.  If either $a$ or $b$ is
an arrow of the form $a _M$, for some $M\in\cM$, then $F(ab)=0\in I$.
Suppose that neither $a$ nor $b$ is of the form $a_M$. 
Then $F(ab)=ab$.   Since $ab$ does
not live on any $C\in\cS$,  we see that $ab$ does not
 live on any $C\in \cC$,  where $\cC$ is the set of simple cycles of $A$ as defined at the beginning of this section, nor is  $ab$ a subpath of any $M$ in $\cM$.
It follows that $\sigma(a)\ne b$.  But then $ab\in I$ and we are done.

\bibliographystyle{plain}

\begin{thebibliography}{99}
\bibitem[ABCP]{ABCP} Assem, Ibrahim; Br\"ustle, Thomas; Charbonneau-Jodoin, Gabrielle; Plamondon, Pierre-Guy. Gentle algebras arising from surface triangulations. Algebra Number Theory 4 (2010), no. 2, 201--229. 
\bibitem[D]{D} Drozd, Ju. A. Tame and wild matrix problems. Representation theory, II (Proc. Second Internat. Conf., Carleton Univ., Ottawa, Ont., 1979), pp. 242--258, Lecture Notes in Math., 832, Springer, Berlin-New York, 1980.
\bibitem[E]{E} Erdmann, Karin. Blocks of tame representation type and related algebras. Lecture Notes in Mathematics, 1428. Springer-Verlag, Berlin, 1990.
\bibitem[GP]{GP} Gelʹfand, I. M.; Ponomarev, V. A. Indecomposable representations of the Lorentz group. (Russian) Uspehi Mat. Nauk  23  1968 no. 2 (140), 3--60.
\bibitem[GS1]{GS1} Green. Edward L.;  Schroll, Sibylle. Brauer configuration algebras, preprint,  arXiv:1508.03617.
\bibitem[GS2]{GS2} Green. Edward L.;  Schroll, Sibylle. Multiserial and special multiserial algebras and their representations,  to appear in Adv. Math. doi:10.1016.j.aim.2016.07.006.
\bibitem[GS3]{GS3} Green. Edward L.;  Schroll, Sibylle. Almost gentle
algebras and their trivial extensions, 	arXiv:1603.03587.
\bibitem[La]{La} Sefi Ladkani, Algebras of quasi-quaternion type,  arXiv:1404.6834.
\bibitem[Ri]{Ri} Ringel, Claus M.
The indecomposable representations of the dihedral 2 -groups. 
Math. Ann.  214  (1975), 19--34. 
\bibitem[R]{R} Roggenkamp, K. W. Biserial algebras and graphs. Algebras and modules, II (Geiranger, 1996), 48--496, CMS Conf. Proc., 24, Amer. Math. Soc., Providence, RI, 1998.
\bibitem[S]{S} Schroll, Sibylle. Trivial extensions of gentle algebras and Brauer graph algebras. J. Algebra 444 (2015), 183--200.
\bibitem[SW]{SW}  Skowro\'nski, Andrzej;  Waschb\"usch, Josef.  Representation-finite biserial algebras. Journal f\"ur Reine und Angewandte Mathematik 345, 1983, pp. 172--181. 
\bibitem[VHW]{VHW} Von H\"ohne,  Hans-Joachim;   Waschb\"usch, Josef.  Die struktur n-reihiger Algebren.  Comm. Algebra 12 (1984), no. 9-10, 1187--1206.
\bibitem[WW]{WW} Wald, Burkhard; Waschb\"usch, Josef. Tame biserial algebras. 
J. Algebra 95 (1985), no. 2, 480--500. 
\end{thebibliography}

\end{document}